\def\@sect#1#2#3#4#5#6[#7]#8{%
  \ifnum #2>\c@secnumdepth
    \let\@svsec\@empty
  \else
    \refstepcounter{#1}%
    \protected@edef\@svsec{\@seccntformat{#1}\relax}%
  \fi
  \@tempskipa #5\relax
  \ifdim \@tempskipa>\z@
    \begingroup
      #6{%
        \@hangfrom{\hskip #3\relax\@svsec}%
          \interlinepenalty \@M #8\@@par}%
    \endgroup
    \csname #1mark\endcsname{#7}%
    \addcontentsline{toc}{#1}{%
      \ifnum #2>\c@secnumdepth \else
        \protect\numberline{\csname the#1\endcsname.}%
      \fi
      #7}%
  \else
    \def\@svsechd{%
      #6{\hskip #3\relax
      \@svsec #8}%
      \csname #1mark\endcsname{#7}%
      \addcontentsline{toc}{#1}{%
        \ifnum #2>\c@secnumdepth \else
          \protect\numberline{\csname the#1\endcsname.}%
        \fi
        #7}}%
  \fi
  \@xsect{#5}}
\def\@seccntformat#1{\csname the#1\endcsname.\quad}
\def\@begintheorem#1#2{\trivlist
   \item[\hskip \labelsep{\bfseries #1\ #2.}]\itshape}
\def\@opargbegintheorem#1#2#3{\trivlist
      \item[\hskip \labelsep{\bfseries #1\ #2\ (#3).}]\itshape}
\newtheorem{theo}[equation]{Th\'eor\`eme}
\newtheorem{lem}[equation]{Lemme}
\newtheorem{definition}[equation]{D\'efinition}
\newtheorem{cor}[equation]{Corollaire}
\newenvironment{remarque}{
\refstepcounter{equation}\trivlist%
\item[\hskip \labelsep{\bfseries Remarque \theequation.\ }]}%
{\endtrivlist}%
\newtheorem{conj}[equation]{Conjecture}
\renewcommand\theequation{\thesection.\arabic{equation}}
\newcommand{\carrenoir}{\rule{0.5em}{0.5em}}
\newenvironment{proof}[1][\@empty]{\textbf{D\'emonstration~%
\ifx\@empty#1:\else #1~:\fi~}}
{\hfill\carrenoir\nolinebreak\vspace{2mm}}
\newcommand{\oper}[2]{\newcommand{#1}{\mathop{\mathrm{#2}}\nolimits} }
\oper{\Vol}{Vol}
\newcommand{\de}{\mathrm{ d }}
\newcommand{\R}{\mathbb R}
\title{Prescription du spectre de Steklov dans une classe conforme}
\author{Pierre Jammes}
\date{}
\begin{document}
\maketitle
{\small 
\textsc{Résumé.---}
Sur toute variété compacte de dimension $n\geq3$ à bord, on prescrit 
toute partie finie du spectre de Steklov dans une classe conforme donnée.
En particulier, on prescrit la multiplicité des valeurs propres. Sur une
surface compacte à bord donnée, on montre que la multiplicité de la 
$k$-ième valeur propre est bornée indépendamment de la métrique. Sur le
disque, on donne des résultats plus précis : la multiplicité de la 1\iere{}
et la 2\ieme{} valeurs propres non nulles sont au plus~2 et~3 respectivement.
Pour le problème de Steklov-Neumann sur le disque, on montre que la 
multiplicité de la $k$-ième valeur propre non nulle est au plus $k+1$.

Mots-clefs : spectre de Steklov, prescription de spectre, géométrie conforme.

\medskip
\textsc{Abstract.---}
On any compact manifold of dimension $n\geq3$ with boundary,
we prescribe any finite part of the Steklov spectrum whithin a given 
conformal class. In particular, we prescribe the multiplicity of the
first eigenvalues. On a compact surface with boundary, we show that
the multiplicity of the $k$-th eigenvalue is bounded independently of
the metric. On the disk, we give more precise results : the multiplicity
of the first and second positive eigenvalues are at most~2 and ~3
respectively. For the Steklov-Neumann problem on the disk, we prove
that the multiplicity of the $k$-th positive eigenvalue is at most $k+1$.

Keywords : Steklov eigenvalues, prescription of eigenvalues, 
conformal geometry.

\medskip
MSC2010 : 35P15, 58J50}

\section{Introduction}
Étant donnée une variété riemannienne $(M,g)$ compacte à bord et une
fonction strictement positive $\rho\in C^0(\partial M)$,
le spectre de Steklov de $M$ est l'ensemble des réels $\sigma$ tels que
le système
\begin{equation}
\left\{\begin{array}{ll}
\Delta f=0 & \textrm{dans }M\\
\frac{\partial f}{\partial \nu}=\sigma \rho f & \textrm{sur }\partial M
\end{array}\right.
\end{equation}
où $\nu$ est un vecteur normal unitaire sortant le long de $\partial M$, 
admette des solutions non triviales. Ce spectre est formé de valeurs 
propres positives notées $0=\sigma_0(M,g,\rho)<\sigma_1(M,g,\rho)\leq
\sigma_2(M,g,\rho)\ldots\to+\infty$. Si $\rho\equiv 1$, alors c'est aussi
le spectre de l'opérateur Dirichlet-to-Neumann sur $M$.

Un grand nombre de travaux récents visent à borner ces valeurs propres
sous des contraintes géométriques, par exemple avec des hypothèses
conformes (\cite{fs11}, \cite{ha11}), ou en fonction d'une constante
isopérimétrique (\cite{cesg11}). Le but de cet article vise au contraire
à mettre en évidence le fait que ce spectre possède une certaine souplesse
et que si la dimension de $M$ est au moins~3, on peut en prescrire toute 
partie finie, c'est-à-dire que si on se donne une suite finie de réels 
strictement 
positifs, il existe une métrique sur $M$
telle que cette suite soit le début du spectre. On va en fait montrer 
un résultat plus fort, à savoir qu'on peut prescrire simultanément 
le début du spectre, la multiplicité des valeurs propres, la classe conforme
de la variété et la fonction densité $\rho$ sur le bord. On étend ainsi 
au spectre de Steklov un résultat obtenu par Y.~Colin de Verdière pour
le laplacien dans~\cite{cdv87} et généralisé aux formes différentielles
dans~\cite{ja11}.

\begin{theo}\label{intro:th1}
Soit $(M^n,g)$ une variété riemannienne compacte à bord de dimension $n\geq3$.
Étant donnée une fonction strictement positive 
$\rho\in C^0(\partial M)$, un entier $N\geq1$ et une suite
finie de réels strictement positifs $0<a_1\leq a_2\leq\ldots\leq a_N$,
il existe une métrique $\tilde g$ conforme à $g$ telle que
$$\sigma_k(M,\rho,\tilde g)=a_k$$
pour tout $k\in[1,N]$.
\end{theo}
\begin{remarque}
On ne peut pas prescrire simultanément le spectre, le volume et la classe
conforme. En effet, si on fixe le volume et la classe conforme, les
valeurs propres ne peuvent pas être arbitrairement grandes (cf.~\cite{fs11}
et~\cite{ha11}). Des obstructions semblables existent pour le laplacien 
usuel (\cite{esi86}), le laplacien de Hodge en restriction aux formes
différentielles de certains degrés (\cite{ja07}, \cite{ja08}) 
et l'opérateur de Dirac (\cite{am03}). 
Le théorème~\ref{intro:th1} montre en revanche que
même en fixant la classe conforme, on ne peut pas majorer le rapport
$\sigma_k/\sigma_l$ pour $k>l$.
\end{remarque}

\begin{remarque}
On sait que la prescription de multiplicité est possible pour les 
opérateurs de Schrödinger en dimension $n\geq3$ 
(\cite{cdv86}, \cite{cdv87}) et les opérateurs 
agissant sur les formes différentielles en dimension $n\geq4$ (\cite{ja11},
\cite{ja12}). Mais ce problème
n'est toujours pas résolu pour les formes différentielles en dimension~3,
ni pour l'opérateur de Dirac, dont on ne sait actuellement prescrire le 
spectre que si les valeurs propres sont simples (\cite{da05}).
\end{remarque}

 La principale difficulté consiste à prescrire la multiplicité des
valeurs propres. On utilisera pour cela les techniques introduites
par Y.~Colin de Verdière dans \cite{cdv86} (voir~\cite{ja09b} pour une 
présentation plus détaillée de ce sujet). Les principaux ingrédients
sont des théorèmes de convergence spectrale (en particulier de 
convergence du spectre vers celui d'un domaine
de la variété, théorème~\ref{conv:thdom} de la section~\ref{conv}), 
un modèle de valeur propre multiple fourni par un laplacien combinatoire
sur des graphes (paragraphe~\ref{presc:graphe}).

La démonstration du théorème~\ref{intro:th1} échoue en dimension~2, entre
autres à cause de l'invariance conforme de la norme $L^2$ du gradient
d'une fonction. On va montrer que cette difficulté ne peut pas être 
contournée et qu'il existe en fait une obstruction à la prescription
de multiplicité en dimension~2. La démonstration suit celles de 
S.~Y. Cheng \cite{ch76} et G.~Besson \cite{be80} pour majorer la 
multiplicité des valeurs propres du laplacien. 

\begin{theo}\label{intro:th2}
Sur toute surface riemannienne compacte orientable à bord $(M,g)$
de genre $\gamma$ et toute  fonction strictement positive 
$\rho\in C^0(\partial M)$, la multiplicité de $\sigma_k(M,\rho,g)$
est majorée par $4\gamma+2k+1$. Si $M$ est non orientable et qu'on
note $l$ le nombre de composantes connexes de $\partial M$, alors 
la multiplicité de $\sigma_k(M,\rho,g)$ est majorée par $4p+4k+1$,
où $p$ est l'invariant topologique $1-\chi(M)-l$.
\end{theo}
\begin{remarque}
Lors de la finalisation de cet article, la démonstration de bornes sur la 
multiplicité est apparu simultanément dans deux prépublications.
Dans \cite{fs12}, A.~Fraser et R.~Schoen ont montré indépendamment le
même théorème, avec une démonstration presque identique. Ils montrent 
aussi que la
borne obtenue pour $\sigma_1(S^1\times[0,1])$, à savoir~3, est optimale.
Simultanément, M.~Karpukhin, G.~Kokarev, I.~Polterovich ont démontré
dans \cite{kkp12} une amémioration de ces bornes avec des techniques 
différentes : ils montrent que la multiplicité de $\sigma_k$ est majorée
par $2p+2k+1$ et $2p+2l+k$, que la surface soit orientable ou non.
\end{remarque}

Les bornes données par le théorème~\ref{intro:th2} sont les mêmes que celles
obtenues par G.~Besson pour
les valeurs propres de laplacien. Il s'avère que le spectre de Steklov 
possède des rigidités supplémentaires qu'on va illustrer dans le cas 
du disque :

\begin{theo}\label{intro:th3}
Sur le disque $\mathbb D$, la multiplicité de $\sigma_1(\mathbb D,\rho,g)$ est
au plus~2 et celle de $\sigma_2(\mathbb D,\rho,g)$ est au plus~3.
\end{theo}
\begin{remarque}
En utilisant les résultats de~\cite{cdv88}, on peut facilement construire
(par excision d'un petit disque sur la sphère)
une métrique sur $\mathbb D$ telle que la première valeur propre non nulle
du laplacien avec condition de Neumann (ou la seconde pour la condition 
de Dirichlet) soit de multiplicité~3. En outre, la borne sur la 
multiplicité de $\sigma_1$ est optimale puisque pour la métrique canonique, 
toutes les valeurs propres non nulles sont doubles.
\end{remarque}
\begin{remarque}
L'article \cite{am94}, qui traite de la multiplicité des $\sigma_k$ sur 
le disque, contient comme cas particulier le fait que
la multiplicité de $\sigma_1$ est au plus~2; on en donnera une démonstration
un peu plus directe. En revanche, la borne sur la multiplicité de 
$\sigma_2$ ne semble pas être apparue auparavant dans la littérature.
\end{remarque}

 On va aussi montrer une autre borne sur la multiplicité dans le cas du
disque, mais pour une variante du problème de Steklov, à savoir le
problème de Steklov-Neumann. Ce problème est défini de la manière suivante :
on partitionne le bord $\partial M$ en deux sous-variétés (pas nécessairement
connexes) $\partial M=
\partial M_S\cup\partial M_N$ et pour une fonction 
$\rho\in C^0(\partial M_S)$, on pose la condition 
$\frac{\partial f}{\partial \nu}=\sigma \rho f$ sur $\partial M_S$
et on demande à $f$ de vérifier la condition de Neumann sur $\partial M_N$
(voir paragraphe~\ref{rappels:sn} pour plus de détails).
\begin{theo}\label{intro:th4}
Étant donnée une partition (non triviale) $\partial\mathbb D_S\cup
\partial\mathbb D_N$ du
bord du disque $\mathbb D$, la multiplicité de $\sigma_k(\mathbb D,\rho,g)$
pour le problème de Steklov-Neumann relativement à cette partition est au plus 
$k+1$.
\end{theo}
\begin{remarque}
Pour le laplacien, les meilleures bornes connues sont asymptotiquement 
de l'ordre de $2k$ quand $k\to+\infty$. Par exemple, pour le laplacien
de Dirichlet sur le disque, il et montré dans \cite{homn99} que la multiplicité
de la $k$-ième valeur propre est au plus $2k-3$. Dans \cite{kkp12}, la
meilleure borne donnée pour le problème de Steklov sur le disque est $k+2$.
\end{remarque}
\begin{remarque}
Dans le cas particulier du problème hydrodynamique de ballottement 
(voir \cite{kk01} ou les rappels du paragraphe~\ref{rappels:sn}), on 
sait que la première valeur
propre non nulle est simple (cf.~\cite{kkm04}). Il est conjecturé que les
autres sont simples aussi, mais cette question reste ouverte. 
\end{remarque}

Y. Colin de Verdière a conjecturé que la multiplicité maximale de la 
2\ieme{} valeur propre d'un opérateur de Shrödinger sur une surface $M$ est
$\mathrm{Chr}(M)-1$ où $\mathrm{Chr}(M)$ est le nombre chromatique de $M$,
c'est-à-dire le nombre de sommets du plus grand graphe complet plongeable
dans $M$. Comme la démonstration du théorème~\ref{intro:th1} repose sur
des graphes plongées dans $M$ dont les sommets sont sur le bord de la variété, 
on peut envisager de transposer cette conjecture au problème de Steklov 
sous la forme suivante :
\begin{conj}
Sur une surface compacte à bord $M$, la multiplicité maximale de $\sigma_1(M)$ 
est $\mathrm{Chr}(M,\partial M)-1$, où $\mathrm{Chr}(M,\partial M)$ désigne
le nombre de sommets du plus grand graphe complet qu'on peut plonger dans 
$M$ en plaçant les sommets sur $\partial M$.
\end{conj}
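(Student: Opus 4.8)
Le plan est de scinder l'énoncé en deux inégalités de natures très différentes. D'une part, la \emph{réalisabilité} (borne inférieure sur la multiplicité maximale) : construire sur $M$ une métrique pour laquelle $\sigma_1$ atteint la multiplicité $\mathrm{Chr}(M,\partial M)-1$. D'autre part, la \emph{borne supérieure} : montrer qu'aucune métrique ne permet de la dépasser. La première partie relèverait des techniques de prescription employées pour le théorème~\ref{intro:th1}, tandis que la seconde, de nature nodale et topologique, constitue la principale difficulté.

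Pour la réalisabilité, on partirait d'un plongement du graphe complet $K_n$ dans $M$, avec $n=\mathrm{Chr}(M,\partial M)$ et tous les sommets placés sur $\partial M$. Sur un tel graphe, un opérateur de Schrödinger combinatoire générique possède une deuxième valeur propre de multiplicité $n-1$, qui est précisément l'invariant de Colin de Verdière $\mu(K_n)=n-1$. En épaississant ce graphe en un voisinage régulier dans $M$, puis en appliquant le théorème de convergence spectrale~\ref{conv:thdom} et le modèle combinatoire du paragraphe~\ref{presc:graphe}, on transporterait cette multiplicité sur le spectre de Steklov. Comme ce dernier est un spectre de bord, il est essentiel que les sommets soient sur $\partial M$ afin que les fonctions propres du modèle vivent au bon endroit ; on obtiendrait ainsi une métrique réalisant $\mathrm{mult}(\sigma_1)=n-1$.

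Pour la borne supérieure, on suivrait la stratégie de Cheng--Besson déjà utilisée au théorème~\ref{intro:th2}, raffinée à la manière de Colin de Verdière. À une base de l'espace propre de $\sigma_1$ on associerait, via les ensembles nodaux des combinaisons linéaires de fonctions propres et un argument de transversalité, un graphe plongé dans $M$ dont les sommets se situent sur $\partial M$. Le point clef serait de montrer que si la multiplicité valait $m$, ce graphe contiendrait une copie de $K_{m+1}$ plongée avec ses sommets au bord, d'où $m+1\leq\mathrm{Chr}(M,\partial M)$, soit $m\leq\mathrm{Chr}(M,\partial M)-1$.

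L'obstacle principal est clairement cette borne supérieure. Même dans le cas classique, la conjecture de Colin de Verdière reliant la multiplicité de la deuxième valeur propre d'un opérateur de Schrödinger au nombre chromatique $\mathrm{Chr}(M)$ reste ouverte en toute généralité, et l'invariant combinatoire $\mu$ sous-jacent est délicat à contrôler. Une démarche raisonnable consisterait donc à établir d'abord la réalisabilité, puis à vérifier la conjecture sur les cas de petite complexité. Sur le disque, par exemple, $K_4$ ne se plonge pas dans $\mathbb D$ avec ses quatre sommets sur le cercle (deux de ses arêtes devraient se couper), alors que $K_3$ s'y plonge, d'où $\mathrm{Chr}(\mathbb D,\partial\mathbb D)=3$ ; la conjecture prédit alors une multiplicité maximale de~$2$ pour $\sigma_1(\mathbb D)$, en parfait accord avec le théorème~\ref{intro:th3} (valeur atteinte par la métrique canonique). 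Cette cohérence est encourageante, mais le traitement du cas général resterait le point dur.
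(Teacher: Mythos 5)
Le point essentiel vous a échappé : l'énoncé en question est une \emph{conjecture} de l'article, et celui-ci n'en donne aucune démonstration. Le texte se contente de la vérifier sur le disque $\mathbb D$ (via le théorème~\ref{intro:th3}, la multiplicité de $\sigma_1$ y est au plus~2, valeur atteinte par la métrique canonique, et $\mathrm{Chr}(\mathbb D,\partial\mathbb D)=3$) et sur la couronne $S^1\times[0,1]$ (le théorème~\ref{intro:th2} donne la borne~3 pour $\sigma_1$, dont Fraser et Schoen ont montré l'optimalité, tandis que $K_4$ se plonge dans la couronne avec ses sommets au bord, d'où $\mathrm{Chr}-1=3$). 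Votre proposition n'est pas non plus une démonstration --- vous le reconnaissez vous-même pour la borne supérieure, qui est précisément tout le contenu de la conjecture et dont l'analogue pour les opérateurs de Schrödinger (conjecture de Colin de Verdière sur $\mathrm{Chr}(M)-1$) reste ouvert. Votre vérification sur le disque reproduit exactement la remarque du texte ; elle est correcte mais ne constitue qu'un test de cohérence.

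Au-delà de ce constat, votre moitié «~réalisabilité~» comporte une faille concrète : vous invoquez le théorème~\ref{conv:thdom} et le modèle de graphe du paragraphe~\ref{presc:graphe} (théorème~\ref{conv:graphe}), mais ces résultats sont établis dans l'article uniquement en dimension $n\geq3$, et leur restriction dimensionnelle n'est pas anodine. En dimension~2, le facteur conforme intervient dans l'énergie de Dirichlet via $h^{n-2}=h^0=1$ : la forme quadratique est conformément invariante, de sorte que les dégénérescences conformes qui font converger le spectre vers celui d'un domaine ne modifient pas l'énergie, et la méthode s'effondre --- l'article le dit explicitement, et le théorème~\ref{intro:th2} (multiplicité bornée indépendamment de la métrique) montre qu'elle \emph{doit} échouer telle quelle. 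Même la borne inférieure de la conjecture exigerait donc une construction spécifiquement bidimensionnelle (dans l'esprit des constructions de Colin de Verdière pour le laplacien sur les surfaces fermées, qui n'utilisent pas ces théorèmes de convergence conforme), et non une citation de la machinerie $n\geq3$ du texte. En résumé : stratégie plausible dans ses grandes lignes, mais aucune des deux inégalités n'est démontrée, et l'outil central que vous proposez pour celle qui semble la plus accessible n'est pas disponible en dimension~2.
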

D'après ce qui précède, cette conjecture est vérifiée sur le disque 
$\mathbb D$ et le cylindre $S^1\times [0,1]$. 

La section~\ref{rappels} rappellera quelques propriétés du spectre de Steklov
et de l'opérateur Dirichlet-to-Neumann dont nous auront besoin. Nous
montrerons dans la section~\ref{conv} les théorèmes de convergence 
spectrale que nous utiliserons, et nous les appliquerons dans la 
section~\ref{presc} pour démontrer le théorème~\ref{intro:th1}. Enfin,
la section~\ref{bornes} sera consacrée au cas de la dimension~2 et à 
la démonstration des théorèmes~\ref{intro:th2}, \ref{intro:th3} 
et~\ref{intro:th4}.

\section{Le problème de Steklov}\label{rappels}
\subsection{Définition du spectre de Steklov}
On se donne une variété riemannienne $(M,g)$ compacte à bord telle que
$\partial M$ soit $C^1$ par morceau (dans la suite,
$g$ désignera indifféremment la métrique sur $M$ ou la métrique induite sur
$\partial M$). 
Le problème des valeurs propres de Steklov consiste à résoudre l'équation
\begin{equation}
\left\{\begin{array}{ll}
\Delta f=0 & \textrm{dans }M\\
\frac{\partial f}{\partial \nu}=\sigma \rho f & \textrm{sur }\partial M
\end{array}\right.
\end{equation}
où $\nu$ est un vecteur unitaire sortant normal au bord et 
$\rho\in C^0(\partial M)$ un fonction densité fixée. 
L'ensemble des
réels $\sigma$ solutions du problème forme un spectre discret positif
noté
\begin{equation}
0=\sigma_0(M,g,\rho)<\sigma_1(M,g,\rho)\leq\sigma_2(M,g,\rho)\ldots.
\end{equation}
Le problème de Steklov,
déjà étudié à la fin du XIX\ieme{} siècle et au début du XX\ieme{} 
(voir~\cite{st99}, \cite{st02} et les références qui y sont données), 
apparaît dans divers problèmes physiques. Par exemple il
permet de modéliser l'évolution d'une membrane libre dont la masse se
concentre sur son bord, et il intervient dans certains problèmes de 
tomographie. On verra au paragraphe qui suit qu'il apparaît aussi
en hydrodynamique.

Notre principal outil sera la caractérisation variationnelle suivante
du spectre de Steklov (cf. \cite{ba80}) :
\begin{equation}\label{rappels:minmax}
\sigma_k(M,g,\rho)=\inf_{V_{k+1}\in H^1(M)}
\sup_{f\in V_{k+1}\backslash\{0\}}
\frac{\int_M|\de f|^2\de v_g}%
{\int_{\partial M}f^2\rho\ \de v_g},
\end{equation}
où $V_k$ parcours les sous-espaces de dimension~$k$ de l'espace 
de Sobolev $H^1(M)$. 

 Il faut prendre garde au fait que $|f|^2=\int_{\partial M}f^2\rho\ \de v_g$ 
ne définit pas une norme de Hilbert sur 
$L^2(M)$ (elle est nulle sur les fonctions vérifiant la condition 
de Dirichlet). En revanche, on peut utiliser les techniques usuelles de
min-max en considérant l'espace de Hilbert $L^2(\partial M)$ muni de la
métrique $|\cdot|$ qu'on vient de définir, et la forme quadratique 
$Q(f)=\int_M|\de \tilde f|^2\de v_g$ où $\tilde f$ est le prolongement
harmonique de $f$. Il sera parfois commode de redéfinir la forme quadratique
$Q$ par
\begin{equation}\label{rappels:Q}
Q(f)=\inf_{\stackrel{\tilde f\in H^1(M)}{\tilde f_{|\partial M}=f}}
\int_M|\de \tilde f|^2\de v_g.
\end{equation}
Cette définition sera en particulier applicable dans les situations où
on considère une métrique singulière sur $M$ (voir 
paragraphe~\ref{rappels:sing}).

Dans le cas homogène, c'est-à-dire quand $\rho\equiv1$, le spectre de 
Steklov est aussi connu comme étant
le spectre de l'opérateur Dirichlet-to-Neumann, qu'on notera 
$\Lambda: C^\infty(\partial M)\to C^\infty(\partial M)$, défini comme suit :
étant donné une fonction $f\in C^\infty(\partial M)$, on prolonge 
harmoniquement $f$ dans $M$ et on pose
\begin{equation}
\Lambda f(x)=\frac{\partial f}{\partial \nu}(x).
\end{equation}
Le spectre de $\Lambda$ est bien celui de $Q$ car pour une fonction harmonique,
on a $\int_M|\de f|^2\de v_g=\int_{\partial M}f\frac{\partial f}{\partial \nu}\de v_g$.

L'opérateur $\Lambda$ n'est pas un opérateur différentiel sur $\partial M$
(ce n'est même pas un opérateur local),
mais c'est un opérateur pseudo-différentiel elliptique d'ordre~1 
(cf.~\cite{ta96b}, ch.~7). En particulier, nous utiliserons le fait qu'il
vérifie une inégalité elliptique :
\begin{equation}\label{rappels:ell}
\|f\|_{H^1(\partial M)}^2\leq c 
\int_{\partial M}f\Lambda f\de v_g
+c'\|f\|_{L^p(\partial M)}^2,
\end{equation}
où $p\in[1,+\infty]$, les constante $c$, $c'$ dépendant de $p$ et de 
la métrique $g$ sur $M$ mais pas de $f$.

Pour finir, nous auront besoin d'une propriété d'unique prolongement des
fonctions propres en dimension~2:
\begin{theo}\label{rappels:prol}
Soit $f$ une fonction propre du problème de Steklov sur une surface.
Si $f$ s'annule sur un ouvert du bord, alors $f\equiv0$.
\end{theo}
\begin{proof}
Soit $I$ un intervalle du bord sur lequel $f$ s'annule. On peut déformer
conformément la surface de manière à ce que $I$ devienne géodésique
et que la métrique reste inchangée sur le reste du bord. Par invariance
conforme de l'harmonicité et de la condition $\partial f/\partial \nu=0$,
$f$ est toujours fonction propre. En notant $x$ un paramètre sur $I$,
on a $\partial^2 f/\partial x^2=0$, donc aussi $\partial^2 f/\partial \nu^2=0$
puisque $f$ est harmonique. Enfin, comme $\partial f/\partial \nu=0$ sur 
$I$, on a aussi $\partial^2 f/\partial x\partial \nu=0$ et donc le
développement à l'ordre~2 de $f$ est nul le long de $I$.

Par conséquent, au voisinage d'un point de $I$, on peut prolonger $f$ par 0 
en dehors de $M$ et obtenir une fonction $\tilde f$ qui est $C^2$ et vérifie
$\Delta f=0$. Par unique prolongement des fonctions harmoniques, on a
$f\equiv0$ sur $M$.
\end{proof}
\subsection{Le problème de Steklov-Neumann}\label{rappels:sn}
 Étant donné une variété compacte à bord $M$, on se donne un domaine
(ou une union de domaines disjoints) à bord $C^1$ par morceaux de 
$\partial M$ qu'on notera 
$\partial M_S$, et on pose $\partial M_N=\partial M\backslash\partial M_S$.
Si $\rho$ est une fonction sur $\partial M_S$, le problème de 
Steklov-Neumann se pose ainsi :
\begin{equation}\label{rappels:eqsn}
\renewcommand{\arraystretch}{1.5}
\left\{\begin{array}{ll}
\Delta f=0 & \textrm{dans }M\\
\frac{\partial f}{\partial \nu}=\sigma \rho f & \textrm{sur }\partial M_S\\
\frac{\partial f}{\partial \nu}=0 & \textrm{sur }\partial M_N
\end{array}\right.
\renewcommand{\arraystretch}{1}
\end{equation}
c'est-à-dire qu'on demande à la fonction harmonique $f$ de vérifier la
condition de Neumann sur $\partial M_N$. On appellera respectivement
bord de Steklov et bord de Neumann les ensembles $\partial M_S$
et $\partial M_N$. Les solutions de ce problème interviennent dans 
l'étude du phénomène hydrodynamique de ballottement (\emph{sloshing
problem}): si on considère 
un fluide parfait incompressible contenu dans un récipient $M$ avec une 
surface libre $\partial M_S$, les petites oscillations périodiques du fluide 
correspondent aux solutions de (\ref{rappels:eqsn}) pour une fonction
$\rho$ constante (voir par exemple~\cite{kk01}).

Le problème de Steklov-Neumann possède un spectre discret et positif qu'on 
notera 
\begin{equation}
0=\sigma_0(M,\partial M_S,g,\rho)<\sigma_1(M,\partial M_S,g,\rho)\leq
\sigma_2(M,\partial M_S,g,\rho)\ldots
\end{equation}

Le spectre de Steklov-Neumann possède la même caractérisation
variationnelle que le spectre de Steklov, à condition
de restreindre l'intégrale sur le bord au bord de Steklov :
\begin{equation}\label{rappels:minmax2}
\sigma_k(M,\partial M_S,g,\rho)=\inf_{V_{k+1}\in H^1(M)}
\sup_{f\in V_{k+1}\backslash\{0\}}
\frac{\int_M|\de f|^2\de v_g}%
{\int_{\partial M_S}f^2\rho\ \de v_g},
\end{equation}
où $V_k$ parcours les sous-espaces de dimension~$k$ de $H^1(M)$.

L'opérateur Dirichlet-to-Neumann est bien défini sur $\partial M_S$
en considérant des fonctions harmoniques vérifiant la condition de 
Neumann sur $\partial M_N$ et vérifie toujours l'inégalité 
elliptique~(\ref{rappels:ell}).

On aura besoin du fait que si on se donne une fonction $f$ sur 
$\partial M_S$ et qu'on la prolonge en une fonction harmonique
(toujours notée $f$), sa norme $L^2$ sur $\partial M_N$ est 
contrôlée par sa norme
sur $\partial M_S$, c'est-à-dire qu'il existe une constante
$c>0$ ne dépendant que de $g$ et $\rho$ telle que $\int_{\partial M_N}f^2
\leq c\int_{\partial M_S}f^2\rho$. Cela découle du fait que la 
norme $L^2(\partial M_N)$ de $f$ est contrôlée
par sa norme $H^{1/2}(M)$, elle même contrôlée par sa
norme $L^2(\partial M_S)$ (cf.~\cite{ta96}, section~4.4).

On utilisera aussi un bref usage du spectre de Steklov-Dirichlet,
défini en considérant des fonctions harmoniques qui vérifient la
condition $f=0$ sur $\partial M\backslash\partial M_S$. La propriété
de ce spectre qui nous intéressera est qu'il est strictement positif 
(cf.~\cite{ag05}).

\subsection{Fonctions harmoniques et métriques singulières}%
\label{rappels:sing}
Dans la section suivante, on aura à manipuler des métriques discontinues.
Si $U$ est un domaine de $(M,g)$ et $\varepsilon\in]0,1[$ un réel fixé, 
elles seront de la forme
\begin{equation}
\left\{\begin{array}{ll}
g_\varepsilon=\varepsilon^2g & \textrm{ sur } U\\
g_\varepsilon=g & \textrm{ sur } M\backslash U.
\end{array}\right.
\end{equation}
Comme les normes $L^2$ et de Sobolev pour les métriques $g$ et 
$g_\varepsilon$ sont équivalentes, la théorie spectrale de la forme
quadratique $\|\de\cdot\|_{g_\varepsilon}^2$ sur $H^1(M)$ est
donc similaire à celle de $\|\de\cdot\|_g^2$. On peut donc définir 
le prolongement harmonique d'une fonction $f\in C^\infty(\partial M)$
comme étant le prolongement minimisant cette forme quadratique.
Cette définition est cohérente avec celle de la forme quadratique $Q$
donnée par~(\ref{rappels:Q}).

Un tel prolongement minimisera en particulier $\|\de\cdot\|_{g_\varepsilon}^2$
en restriction à chacun des domaines $U$ et $M\backslash U$, il sera donc 
harmonique au sens usuel sur ces deux domaines. 

 Les mêmes remarques s'appliquent au problème de Steklov-Neumann.

\section{Théorèmes de convergence spectrale}\label{conv}
\subsection{Rappels}
Dans cette section, nous allons montrer plusieurs théorèmes de convergence
spectrale dont nous aurons besoin pour prescrire le spectre de Steklov.
Nous utiliserons pour cela les techniques développées par Y.~Colin de Verdière
dans~\cite{cdv86}. Pour prescrire la multiplicité des valeurs propres,
il nous faudra montrer la convergence des espaces propres, et nous
aurons aussi besoin d'une certaine uniformité de la convergence, nous 
reprendrons pour cela les notations de \cite{cdv86} :

 Soit $E_0$ et $E_1$ sont deux sous-espaces vectoriels de même dimension~$N$
d'un espace de Hilbert, munis respectivement des formes quadratiques $q_0$
et $q_1$. Si $E_0$ et $E_1$ sont suffisamment proches, il existe
une isométrie naturelle $\psi$ entre les deux (voir la section~I de
\cite{cdv86} pour les détails de la construction), on définit alors
l'écart entre $q_0$ et $q_1$ par $\|q_1\circ\psi-q_0\|$. Pour deux
formes quadratiques $Q_0$ et $Q_1$ sur l'espace de Hilbert, on appellera
\emph{$N$-écart spectral entre $Q_0$ et $Q_1$} l'écart entre les
deux formes quadratiques restreintes à la somme des espaces propres associés
aux $N$ premières valeurs propres. Si cet écart est petit, alors les $N$
premières valeurs propres de $Q_0$ et leurs espaces propres sont proches
de ceux de $Q_1$.

 On veut montrer que la convergence spectrale est uniforme pour une certaine
famille de spectres limites. Comme dans \cite{cdv86} on dira donc qu'une forme
quadratique vérifie l'hypothèse ($*$) si ses valeurs propres vérifient
$$\lambda_1\leq\ldots\leq\lambda_N<\lambda_N+\eta\leq\lambda_{N+1}\leq M
\ \ (*)$$
pour un entier $N$ et des réels $\eta,M>0$ fixés une fois pour toute.

Comme dans \cite{cdv86}, on fera appel aux deux lemmes qui suivent. Les
constantes $N$, $M$ et $\eta$ qui interviennent dans les énoncés
font référence à l'hypothèse ($*$) définie plus haut.
\begin{lem}[\cite{cdv86}, th.~I.7]\label{conv:lem1}
Soit $Q$ une forme quadratique positive sur un espace de Hilbert $\mathcal H$
dont le domaine admet la décomposition $Q$-orthogonale
$\mathrm{dom}(Q)=\mathcal H_0\oplus\mathcal H_\infty$. Pour tout
$\varepsilon>0$, il existe une constante $C(\eta,M,N,\varepsilon)>0$ (grande)
telle que si $Q_0=Q_{|\mathcal H_0}$ vérifie l'hypothèse ($*$) et que
$\forall x\in\mathcal H_\infty,\ Q(x)\geq C|x|^2$,
alors $Q$ et $Q_0$ ont un $N$-écart spectral inférieur à $\varepsilon$.
\end{lem}

\begin{lem}[\cite{cdv86}, th.~I.8]\label{conv:lem2}
Soit $(\mathcal H,|\cdot|)$ un espace de Hilbert muni d'une forme quadratique
positive $Q$. On se donne en outre une suite de métriques $|\cdot|_n$
sur $\mathcal H$ et une suite de formes quadratiques $Q_n$ de même
domaine que $Q$ telles que:
\begin{itemize}
\item[(i)] il existe $C_1,C_2>0$ tels que $\forall x\in\mathcal H,\
C_1|x|\leq |x|_n\leq C_2|x|$;
\item[(ii)] pour tout $x\in \mathrm{dom}(Q)$, $|x|_n\to|x|$;
\item[(iii)] pour tout $x\in \mathrm{dom}(Q)$, $Q(x)\leq Q_n(x)$;
\item[(iv)] pour tout $x\in \mathrm{dom}(Q)$, $Q_n(x)\to Q(x)$.
\end{itemize}
Si $Q$ vérifie l'hypothèse ($*$), alors à partir d'un certain rang (dépendant
de $\eta$, $M$ et $N$), $Q$ et $Q_n$ ont un $N$-écart spectral inférieur à
$\varepsilon$.
\end{lem}
\begin{remarque}\label{conv:rem1}
Comme on l'a remarqué dans~\cite{ja11}, dans le lemme~\ref{conv:lem2}, 
on peut affaiblir l'hypothèse
$C_1|x|\leq |x|_n\leq C_2|x|$ en $C_1|x|\leq |x|_n\leq C_2|x|+
\varepsilon_nQ_n(x)^\frac12$ avec $\varepsilon_n\to0$,
la démonstration restant exactement la même (on peut aussi remplacer 
$Q_n$ par $Q$ dans cette dernière inégalité). En particulier, il n'est
pas nécessaire que l'espace de Hilbert $(\mathcal H,|\cdot|)$ soit
complet pour $|\cdot|_n$. 
\end{remarque}
\begin{remarque}\label{conv:rem2}
On peut aussi remplacer l'hypothèse $x\in\mathrm{dom}(Q)\Rightarrow
Q(x)\leq Q_n(x)$
par $Q(x)\leq M\Rightarrow Q(x)\leq (1+\varepsilon_n)Q_n(x)$ avec
$\varepsilon_n\to0$. 
\end{remarque}
\begin{remarque}
Pour déduire la convergence du spectre et des espaces propres de la
convergence des formes quadratiques, on doit en principe se ramener à une
norme de Hilbert fixe. Ça ne sera pas nécessaire dans la suite car les
étapes de la démonstration où la norme varie seront traitées à l'aide
du lemme~\ref{conv:lem2}.
\end{remarque}

\subsection{Densité et convergence de spectre}
Notre premier résultat de convergence sera de montrer qu'avec une densité
fixée sur le bord, on peut déformer conformément la métrique de manière à 
faire tendre le spectre de Steklov vers le spectre correspondant à une autre 
densité. On peut en outre faire tendre la métrique déformée vers la métrique
initiale dans l'intérieur de la variété. On se restreindra au cas où la 
densité initiale est plus petite que la densité du spectre limite, ce qui
sera suffisant pour les applications dans la section suivante.
\begin{theo}\label{conv:thdens}
Soit $(M,g)$ une variété riemannienne compacte à bord, et $\rho,\bar\rho\in
C^0(\partial M)$ deux fonctions sur le bord de $M$ telles que 
$\bar\rho\geq\rho$.

 Il existe une famille $g_\varepsilon$ de métriques conformes à $g$ sur $M$
telle que :
\begin{enumerate}
\item $\sigma_k(M,g_\varepsilon,\rho)$ tend vers $\sigma_k(M,g,\bar\rho)$ quand
$\varepsilon\to0$ pour tout $k\geq0$, avec convergence des espaces propres.
\item $g_\varepsilon=(\bar\rho/\rho)^{2/(n-1)}g$ sur $\partial M$.
\item $g_\varepsilon$ tend vers $g$ uniformément sur tout compact dans
l'intérieur de $M$.
\end{enumerate}
En outre, si les $\sigma_k(M,g,\bar\rho)$ vérifient l'hypothèse ($*$), 
alors le $N$-écart spectral entre $(M,g_\varepsilon,\rho)$
et $(M,g,\bar\rho)$ tend vers~0.
\end{theo}

\begin{proof}
On définit une famille $h_\varepsilon\in C^\infty(M)$ de facteurs conformes
de la manière suivante : on fixe $h_\varepsilon(x)=(\bar\rho/\rho)^{1/(n-1)}$
pour $x\in\partial M$ et on étend $h_\varepsilon$ de manière lisse de sorte
que la famille $(h_\varepsilon)$ tende simplement vers~1 dans l'intérieur
de $M$, et uniformément sur tout compact ne rencontrant pas le bord. On 
pose alors $g_\varepsilon=h_\varepsilon^2g$ pour tout $\varepsilon$.

La famille de métriques $g_\varepsilon$ induit les familles de normes et 
de formes quadratiques
\begin{equation}
Q_\varepsilon(f)=\inf_{\stackrel{\tilde f\in H^1(M)}{\tilde f_{|\partial M}=f}}
\int_Mh_\varepsilon^{n-2}|\de\tilde f|^2\de_{v_g}\textrm{ et } 
|f|_\varepsilon=\int_{\partial M}f^2\bar\rho\ \de_{v_g}.
\end{equation}
Comme $\bar\rho\geq\rho$, on peut choisir une suite $(h_\varepsilon)$ 
décroissante, les suites $Q_\varepsilon$ et $|\cdot|_\varepsilon$ vérifient 
alors les hypothèses du lemme~\ref{conv:lem2}, ce qui suffit pour conclure.
\end{proof}

\subsection{Convergence vers le spectre d'un domaine}
Le second théorème consiste à faire converger le spectre de Steklov 
d'une variété à bord $M$ vers le spectre de Steklov-Neumann d'un domaine
$U$ de $M$, avec la condition de Steklov sur $\partial U_S=
\partial U\cap\partial M$ 
et la condition de Neumann sur le reste du bord de $U$. Ce résultat 
étend au spectre de Steklov de théorèmes analogues concernant le laplacien
agissant sur les fonctions (\cite{cdv86}) et sur les formes différentielles
(\cite{ja11}).
\begin{theo}\label{conv:thdom}
Soit $(M^n,g)$ une variété riemannienne compacte à bord de dimension $n\geq3$, 
$\rho\in C^\infty(\partial M)$ et 
$U$ un domaine de $M$ à bord $C^1$ par morceau tel que 
$\partial U_S=\partial U\cap\partial M$ soit non vide.
Il existe une famille $g_\varepsilon$ de métriques sur $M$ conformes à $g$
telle que
\begin{enumerate}
\item $g=g_\varepsilon$ en restriction à $U$.
\item $\Vol(M,g_\varepsilon)\to\Vol(U,g)$ quand $\varepsilon\to0$.
\item $\sigma_k(M,\rho,g_\varepsilon)\to
\sigma_k(U,\partial U_S,\rho_{|\partial U_S},g_{|U})$ quand
$\varepsilon\to0$ pour tout $k\geq0$, avec convergence des espaces propres.
\end{enumerate}
En outre, si les $\sigma_k(U,\partial U_S,\rho_{|\partial U_S},g_{|U})$ 
vérifient l'hypothèse ($*$), alors le $N$-écart spectral entre 
$(M,\rho,g_\varepsilon)$
et $(U,\partial U_S,\rho_{|\partial U_S},g_{|U})$ tend vers~0.

\end{theo}

\begin{proof}
La démonstration est similaire à celle du théorème~III.1 de \cite{cdv86}
et passe par l'intermédiaire, pour un réel $\eta>0$ petit donné,
de la métrique singulière $g_\eta$ définie par $g_\eta=g$
sur $U$ et $g_\eta=\eta^2g$ sur $M\backslash U$. Elle se 
déroule en deux étapes : d'abord, on montre la convergence spectre pour
la famille de métriques singulières, puis on approche ces métriques 
singulières par des métriques lisses. On conclut en se donnant, pour
un $\varepsilon>0$ donné, une métrique $g_\eta$ tel que l'écart
spectral avec le spectre de $(U,g)$ soit inférieur à $\varepsilon$,
puis une métrique lisse $g_\varepsilon$ tel que l'écart spectral avec
$g_\eta$ soit lui aussi inférieur à $\varepsilon$.

On fera souvent appel à la forme quadratique définie en~(\ref{rappels:Q}),
en particulier quand on manipule des métriques singulières.

\emph{Étape 1.}
Un réel $\eta>0$ étant donné, la métrique $g_\eta$ induit sur
$L^2(\partial M)$ la forme
quadratique 
\begin{equation}
Q_\eta(f)=\inf_{\tilde f_{|\partial M}=f}
\left(\int_U|\de\tilde f|^2\de_{v_g}+\eta^{n-2}
\int_{M\backslash U} |\de\tilde f|^2\de_{v_g}\right)
\end{equation}
 et la norme 
$|f|_{g_\eta}=\int_{\partial U_S} f^2\rho\ \de_{v_g}+
\eta^{(n-1)}\int_{\partial M\backslash \partial U_S} f^2\rho\ \de_{v_g}$.
On va utiliser le lemme~\ref{conv:lem1} pour se ramener à un sous-domaine
de la forme quadratique puis appliquer le lemme~\ref{conv:lem2}.

En notant $\mathcal H$ le domaine de la forme quadratique $Q_\eta$,
on définit l'espace $\mathcal H_\infty=\{f\in \mathcal H, 
f_{|\partial U_S}=0\}$ et on note $\mathcal H_0$ son orthogonal pour la 
forme quadratique $Q_\eta$.
Pour appliquer le lemme~\ref{conv:lem1}, on doit minorer la forme quadratique
$Q_\eta$ sur $\mathcal H_\infty$ en fonction de $|\cdot|_\eta$.
Si $f\in\mathcal H_\infty$, alors $|f|_\eta^2=\eta^{(n-1)}
\int_{\partial M\backslash \partial U_S} f^2\rho\ \de_{v_g}
=\eta^{(n-1)}|f|^2$ et $Q_\eta(f)\geq\eta^{n-2}
\inf_{\tilde f_{|\partial M}=f}\int_M|\de\tilde f|^2\de_{v_g}=\eta^{n-2}Q(f)$.
On est donc ramené à l'étude du spectre de la forme quadratique $Q$
associée à la métrique initiale $g$ en restriction à l'espace 
$\mathcal H_\infty$, c'est-à-dire à minorer le spectre de Steklov sur $M$
avec condition de Dirichlet sur $\partial U_S$. Comme $0$ 
n'est pas dans le spectre de Steklov-Dirichlet (cf. 
paragraphe~\ref{rappels:sn}), il existe une constante 
$c>0$ telle que $Q(f)/|f|^2>c$ pour tout
$f\in\mathcal H_\infty$. Par conséquent, $Q_\eta(f)/|f|_\eta\geq
c\cdot\eta^{-2}$ pour tout $f\in\mathcal H_\infty$. Si $\eta$ est suffisamment
petit, on peut donc appliquer le lemme~\ref{conv:lem1} et en déduire que
le spectre pour la métrique $g_\eta$ est proche du spectre de $Q_\eta$ 
restreint à $\mathcal H_0$.

Il reste à montrer que la limite du spectre de ${Q_\eta}_{|\mathcal H_0}$
est le spectre de Steklov-Neumann du domaine $U$. On utilisera pour
cela le lemme~\ref{conv:lem2}.
Puisque $\mathcal H_0$ est défini comme le $Q_\eta$-orthogonal des fonctions de 
$\partial M$ nulles sur $\partial U_S$, une fonction de $\mathcal H_0$ est
entièrement déterminée par sa restriction à $\partial U_S$. Plus précisément, 
parmi les fonctions $f$ dont la valeur sur $\partial U_S$ est fixée, celle 
qui est dans $\mathcal H_0$ est celle minimise la forme quadratique $Q_\eta$. 
C'est donc la restriction au bord du prolongement harmonique (tel qu'on l'a
défini au paragraphe~\ref{rappels:sing}) de $f_{\partial U_S}$ avec 
condition de Neumann sur $\partial M\backslash\partial U_S$.
Dans la suite, on identifiera souvent une fonction sur $\partial U_S$ avec 
le prolongement ainsi défini.

 La norme $|\cdot|_\eta$ converge en décroissant vers la norme $|\cdot|$
définie par $|f|=\int_{\partial U_S}f^2\rho\ \de v_g$. L'hypothèse~(ii) et 
la première inégalité de l'hypothèse~(i) du lemme~\ref{conv:lem2} sont donc 
satisfaites. Les hypothèses~(iii) et~(iv) sont vérifiées pour les mêmes 
raisons.

Il reste à montrer que la deuxième inégalité de l'hypothèse~(i) est vérifiée. 
Pour cela, on doit majorer $\int_{\partial M\backslash\partial U_S}f^2\rho\ 
\de v_g$.
Notons $\tilde f$ le prolongement de $f_{|\partial U_S}$ qui est harmonique
au sens du paragraphe~\ref{rappels:sing}, c'est-à-dire que 
$Q_\eta(f)=\int_U|\de\tilde f|^2\de_{v_g}+\eta^{n-2}
\int_{M\backslash U} |\de\tilde f|^2\de_{v_g}$.
Comme on l'a remarqué au paragraphe~\ref{rappels:sn}, puisque $\tilde f$
est harmonique sur $M\backslash U$ avec condition de Neumann sur 
$\partial M\backslash\partial U_S$, la norme 
$L^2(\partial M\backslash\partial U_S,\rho)$ de $\tilde f$ est contrôlée par 
sa norme $L^2$ sur $\partial U\backslash\partial U_S$, c'est-à-dire que
$\int_{\partial M\backslash\partial U_S}\tilde f^2\rho\ \de v_g \leq 
c_1\int_{\partial U\backslash\partial U_S}\tilde f^2\rho\ \de v_g$.
Notons que la constante $c_1$ est invariante par homothétie, donc
indépendante de $\eta$, à condition de considérer sur $\partial(M\backslash U)$
la métrique induite par la métrique de $M\backslash U$. En considérant
la métrique $g_\eta$ sur $\partial M\backslash\partial U_S$ et la métrique $g$
sur $\partial U\backslash\partial U_S$ on obtient 
\begin{eqnarray}
\int_{\partial M\backslash\partial U_S}\tilde f^2\rho\ \de v_{g_\eta} & \leq &
\eta^{n-1}c_1\int_{\partial U\backslash\partial U_S}\tilde f^2\de v_g
\end{eqnarray}
On majore le membre de droite à l'aide de l'inégalité elliptique de
l'opérateur Dirichlet-to-Neumann sur $\partial U$.
\begin{eqnarray}
\int_{\partial U\backslash\partial U_S}\tilde f^2\de v_g &\leq&
\|f\|^2_{L^2(\partial U)}\leq \|f\|^2_{H^1(\partial U)}\nonumber\\
&\leq & c_2\int_{\partial U}f\frac{\partial f}{\partial\nu}\de v_g
=c_2\int_U|\de\tilde f|^2\de v_g\nonumber\\
&\leq &c_2Q_\eta(f).
\end{eqnarray}
 On a donc finalement $|f|_\eta^2\leq|f|^2+\eta^{n-1}c_1c_2Q_\eta(f)$ ce
qui permet d'appliquer le lemme~\ref{conv:lem2} et la 
remarque~\ref{conv:rem1}.

\emph{Étape 2.}
On doit montrer que pour tout $\eta>0$, le spectre de $Q_\eta$
peut être approché par le spectre de métriques lisses conformes à $g$.

Le paramètre $\eta$ état fixé, on définit une suite de facteurs conformes 
$h_i$ tels que la suite $(h_i)$ converge en décroissant vers la fonction
$\chi_U+\eta\chi_{M\backslash U}$ et on pose $g_i=h_i^2g$. Les suites de
norme de Hilbert $|\cdot|_i$ et de formes quadratiques $Q_i$ associées à
$g_i$ convergent vers $|\cdot|_\eta$ et $Q_\eta$ en vérifiant les hypothèses
du lemme~\ref{conv:lem2}, ce qui assure la convergence du spectre et
des espaces propres.
\end{proof}

\subsection{Convergence vers le spectre du bord}
Enfin, nous allons montrer qu'on peut faire tendre le spectre de Steklov
homogène (c'est-à-dire que $\rho\equiv1$)
d'une variété à bord $M$ vers le spectre du laplacien sur $\partial M$, 
la métrique sur $\partial M$ restant homothétique à la métrique initiale.
Bien que ce théorème ne soit pas indispensable pour démontrer le 
théorème~\ref{intro:th1}, on peut l'utiliser si la dimension de~$\partial M$
est au moins~3. Il semble aussi intéressant en lui-même et fournit un exemple
d'application du théorème démontré au paragraphe précédent.
\begin{theo}\label{conv:bord}
Soit $(M^n,g)$ une variété riemannienne compacte à bord de dimension 
$n\geq3$. Il existe une
famille  $g_\varepsilon$ de métriques sur $M$ conformes à $g$ et homothétiques
à $g$ le long de $\partial M$ telle que
 $\sigma_k(M,g_\varepsilon)\to\lambda_k(M,g)$ quand 
$\varepsilon\to0$ pour tout $k\geq0$.

En outre, si les $\lambda_k(M,g)$ vérifient l'hypothèse ($*$),
alors le $N$-écart spectral entre le spectre de Steklov de $(M,
g_\varepsilon)$ et le spectre du laplacien de $(\partial M,g)$
tend vers~0.
\end{theo}

\begin{proof}
Le principe de la démonstration consiste à se ramener au cas d'un
voisinage collier du bord (avec la condition mixte Steklov-Neumann) en
utilisant le théorème~\ref{conv:thdom}. On va procéder en trois étapes :
d'abord montrer la convergence du spectre d'un voisinage collier de 
$\partial M$ muni d'une métrique produit, puis traiter le cas de la 
restriction de métrique $g$ à ce voisinage collier, et enfin montrer la 
convergence du spectre de $M$.

\emph{Étape 1.} On va déterminer l'asymptotique (quand $\eta\to0$) du 
spectre de la variété
produit $\partial M\times [0,\eta]$ pour un métrique produit, 
avec la condition de Steklov sur  $\partial M\times\{0\}$ et la 
condition de Neumann sur $\partial M\times\{\eta\}$.

On peut déduire ce spectre du spectre de Steklov de $\partial M\times 
[0,2\eta]$ (avec condition de Steklov sur les deux bords) par symétrie: 
en effet, on peut partitionner les valeurs propres de $\partial M\times
[0,2\eta]$ en deux, selon que les fonctions propres sont symétriques
ou antisymétriques. Ces fonctions propres vérifient la condition
de Neumann sur $\partial M\times\{\eta\}$ dans le premier cas et
la condition de Dirichlet dans le second cas. Le spectre de $\partial M
\times [0,\eta]$ avec condition mixte est donc le spectre de Steklov
de $\partial M\times [0,2\eta]$ restreint aux fonctions symétriques.

Le spectre de Steklov de $\partial M\times [0,2\eta]$ a été calculé 
explicitement dans~\cite{cesg11} (lemme~6.1) en fonction du spectre du
laplacien sur $\partial M$ : si $\partial M$ est de volume~1 et si
on note $\lambda_k$ ses valeurs propres et $u_k$ ses fonctions propres,
alors le spectre non nul de $\partial M\times [0,2\eta]$ restreint 
aux fonctions symétriques est $\sqrt{\lambda_k}
\tanh(\eta\sqrt{\lambda_k})$ les fonctions propres associées étant
$\cosh(\sqrt{\lambda_k}t)u_k(x)$, où $x$ désigne un point de $\partial M$
et $t$ la coordonnée sur l'intervalle. Le spectre de Steklov-Neumann de 
$\partial M\times [0,\eta]$ se comporte donc asymptotiquement 
comme $\eta\lambda_k$ quand $\eta\to0$. En pratiquant une homothétie sur 
$\partial M\times [0,\eta]$, on peut donc faire tendre son
spectre vers $\lambda_k$. On peut facilement vérifier à l'aide de l'expression
des fonctions propres qu'il y a bien convergence des espaces propres.

\emph{Étape 2.}
Étant donnée la variété à bord $(M,g)$ et un réel $\eta>0$ petit, on
considère le $\eta$-voisinage collier $M_\eta$ de $\partial M$,
c'est-à-dire que $M_\eta=\{x\in M, d(x,\partial M)\leq\eta\}$.
Pour $\eta$ suffisamment petit, $M_\eta$ est difféomorphe au
produit de $\partial M$ avec un intervalle. On considère alors le problème
de Steklov-Neumann sur $M_\eta$ comme dans l'étape~1.

Quand $\eta$ tend vers~0, la métrique $g$ restreinte à $M_\eta$
est de plus en plus proche d'une métrique produit. Plus précisément, il 
existe une famille de réels $\tau_\eta>1$ telle que $\tau_\eta
\to1$ quand $\eta\to0$ et $1/\tau_\eta g_\eta\leq
g_{|M_\eta}\leq\tau_\eta g_\eta$, où $g_\eta$
désigne la métrique produit sur $\partial M\times[0,\eta]$.
Comme la constante $\tau_\eta$ contrôle aussi l'écart entre les
normes de Hilbert et les formes quadratiques pour le problème de 
Steklov-Neumann sur $(M_\eta,g)$ et $(M_\eta,g_\eta)$,
on peut appliquer le lemme~\ref{conv:lem2} et la remarque~\ref{conv:rem2}
pour obtenir la convergence spectrale comme dans l'étape~1.

\emph{Étape 3.}
Pour pouvoir conclure, il on aura besoin de faire
tendre le spectre de Steklov de $M$ vers celui de $M_\eta$
en restant dans la classe conforme de $g$. On va utilise pour cela
le théorème~\ref{conv:thdom}:

Un réel $\varepsilon>0$ étant donné, on choisit
$\eta>0$ et un rapport d'homothétie $r_\eta>0$ tels que le spectre de 
$(M_\eta,r_\eta^2g)$ soit $\varepsilon$-proche de celui de $(\partial M,g)$.
Puis, en appliquant le théorème~\ref{conv:thdom} avec $U=M_\eta$, 
on obtient une métrique 
$g'_\eta$ sur $M$ tel que le spectre de $(M,g'_\eta)$ soit 
$\varepsilon$-proche de celui de $(M_\eta,r_\eta^2g)$.
 Quand $\varepsilon$ tend vers~0, on a ainsi
convergence du spectre et des espaces propres de $(M,\rho,g_\varepsilon)$
vers ceux du laplacien sur $(\partial M,g)$.
\end{proof}

\section{Prescription du spectre}\label{presc}
\subsection{L'hypothèse de transversalité d'Arnol'd}
Pour prescrire la multiplicité des valeurs propres de Steklov
nous utilisons, selon la méthode introduite par Colin de Verdière, trois
ingrédients : les théorèmes de convergence spectrale démontrés dans
les sections précédentes, des modèles de valeurs propres multiples déjà
connus et une propriété de stabilité vérifiée par ces modèles. Nous 
allons commencer par rappeler cette dernière. On verra au paragraphe
suivant comment utiliser des graphes complets comme modèles de spectre 
avec multiplicité.

On suppose qu'on a une famille d'opérateurs $(P_a)_{a\in B^k}$, où $B^k$
est la boule unité de $\R^k$ (en pratique, $P_a$ est l'opérateur 
Dirichlet-to-Neumann associé à une métrique $g_a$), tels que
$P_0$ possède une valeur propre $\lambda_0$ d'espace propre $E_0$ et
de multiplicité $N$. Pour les petites valeurs de $a$, $P_a$ possède des
valeurs propres proches de $\lambda_0$ dont la somme des espaces propres
est de dimension~$N$. Comme dans la définition de l'écart spectral,
on identifie cette somme à $E_0$ et on note $q_a$ la forme
quadratique associée à $P_a$ transportée sur $E_0$.
\begin{definition}[\cite{cdv88}]\label{presc:def}
On dit que $\lambda_0$ vérifie l'hypothèse de transversalité d'Arnol'd si
l'application $\Psi:a\mapsto q_a$ de $B^k$ dans $\mathcal Q(E_0)$ est
essentielle en $0$, c'est-à-dire qu'il existe $\varepsilon>0$ tel que si
$\Phi:B^k\to\mathcal Q(E_0)$ vérifie $\|\Psi-\Phi\|_{\infty}\leq\varepsilon$,
alors il existe $a_0\in B^k$ tel que $\Phi(a_0)=q_0$.
\end{definition}
Une propriété cruciale est que si $\Phi$ provient d'une famille $(P'_a)$
d'opérateurs, alors $\lambda_0$ est valeur propre de $P'_{a_0}$ de
multiplicité~$N$ et vérifie la même propriété de transversalité, ce
qui justifie qu'on parle de stabilité de la multiplicité.  Comme remarqué 
dans \cite{cdv88}, on peut généraliser cette définition à une suite finie 
de valeurs propres.

\subsection{Voisinages tubulaires de graphes}\label{presc:graphe}
Dans \cite{cdv88}, Y. Colin de Verdière a montré qu'un graphe complet
muni d'un laplacien combinatoire et d'une métrique appropriée possède une 
(ou plusieurs) valeur propre multiple vérifiant
la propriété de transversalité d'Arnol'd. Dans ce paragraphe, nous allons
utiliser ce résultat pour construire une variété dont on prescrit le 
début du spectre Steklov-Neumann avec multiplicité.

On note $\Gamma$ un graphe fini, $S$ l'ensemble de ses sommets et 
$A$ l'ensemble de ses arêtes. On se donne
une métrique sur ce graphe en associant à chaque arête $a_i\in A$, une 
longueur $l_i>0$. Le laplacien combinatoire sur $\Gamma$ est l'opérateur
agissant sur les fonctions $S\to\R$ induit par la forme quadratique
$q(f)=\sum_{a_i\in A} l_i\de_{a_i}(f)^2$, avec $\de_a(f)=(f(x)-f(y))/l_i$,
$x$ et $y$ étant les extrémités de l'arête $a_i$. L'espace des fonctions
$\R^S$ sur les sommets est muni de sa structure euclidienne canonique.

On utilise alors le résultat de Colin de Verdière suivant:
\begin{theo}[\cite{cdv88}, section~4]\label{presc:thcdv}
Étant donné une suite $0=\lambda_0<\lambda_1\leq\lambda_2\leq\cdots\leq
\lambda_N$, il existe
une métrique sur le graphe complet à $N+1$ sommets dont le spectre
est la suite $(\lambda_i)$. De plus, ce spectre vérifie la propriété de 
transversalité d'Arnol'd.
\end{theo}

Il reste à construire une famille de variétés dont le début du spectre 
ressemble à celui d'un graphe complet. Ces variétés, qu'on notera
$\Omega_\varepsilon$, seront localement des domaines euclidiens.
Le graphe $\Gamma$ sera plongeable isométriquement dans $\Omega_\varepsilon$,
les sommets étant situés sur le bord de Steklov de la variété (construire
$\Omega_\varepsilon$ comme étant globalement un domaine euclidien 
nécessiterait d'imposer des contraintes sur les longueurs $l_i$,
ce qu'on veut éviter).

Plus précisément, deux réels $c>0$ et $\varepsilon$ étant fixé, on se donne 
pour chaque sommet $s_j\in S$ une demi-boule $B_j$ de rayon $c\varepsilon$
(dans la suite, utilisera l'indice $j$ pour les sommets du graphe et
on réservera l'indice $i$ pour les arêtes).
La constante $c$ sera fixée plus loin. Pour chaque arête 
$a_i\in A$, on se donne ensuite un cylindre $C_i$ de rayon $\varepsilon$ et 
de longueur $l_i-2c\varepsilon$. Si on note $j$ et $j'$ les indices des 
sommets extrémités de l'arête $a_i$, et qu'on place les boules $B^{n-1}$ 
qui bordent le cylindre $C_i$ tangentiellement aux demi-boules $B_j$ et 
$B_{j'}$, on peut plonger isométriquement l'arête $a_i$ dans la réunion
de $C_i$, $B_j$ et $B_{j'}$,en identifiant les sommets $s_j$ et $s_{j'}$ 
avec les centres de $B_j$ et $B_{j'}$. En répétant le procédé pour chaque 
arête, le graphe $\Gamma$ se plonge isométriquement dans la réunion des 
demi-boules $B_j$ et des cylindres $C_i$. Pour
construire le domaine $\Omega_\varepsilon$, on va prolonger le cylindre
$C_i$ en un cylindre $C'_i$ dont les extrémités seront dans les demi-boules.
Chaque boule $B_j$ étant vue localement comme un domaine euclidien, on 
construit une application de $C'_i=B^{n-1}\times[0,l_i]$ dans la réunion
des $C_i$ et des voisinages des $B_j$ telle que:
\begin{itemize}
\item $B^{n-1}\times[c\varepsilon,l_i-c\varepsilon]$ est envoyé isométriquement
sur $C_i$;
\item pour $t\in[0,c\varepsilon]$ et $[l_i-c\varepsilon,l_i]$, chaque boule 
$B^{n-1}\times\{t\}$ est plongée isométriquement dans
le voisinage de la demi-boule $B_j$ correspondante; 
\item $B^{n-1}\times\{0\}$ et $B^{n-1}\times\{1\}$ sont envoyé sur le 
bord équatorial de $B_j$;
\item l'application obtenue est $1$-lipschitzienne.
\end{itemize} 
Les extrémités des cylindres ne sont donc pas isométriques à la métrique
produit mais légèrement tordus à l'intérieur des boules (cf. 
figure~\ref{presc:fig}). En outre, on
fait en sorte que les images de chaque $C'_i$ soient disjointes 
(on choisit $c$ assez grand pour que ça soit possible). Ces précisions
techniques faciliteront l'étude du spectre.

\begin{figure}[h]
\begin{center}
\begin{picture}(0,0)%
\includegraphics{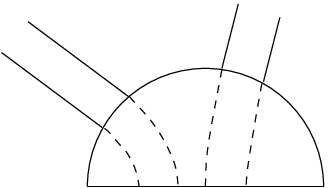}%
\end{picture}%
\setlength{\unitlength}{4144sp}%
\begingroup\makeatletter\ifx\SetFigFont\undefined%
\gdef\SetFigFont#1#2#3#4#5{%
  \reset@font\fontsize{#1}{#2pt}%
  \fontfamily{#3}\fontseries{#4}\fontshape{#5}%
  \selectfont}%
\fi\endgroup%
\begin{picture}(2500,1414)(686,-973)
\put(2841,214){\makebox(0,0)[lb]{\smash{{\SetFigFont{12}{14.4}{\rmdefault}{\mddefault}{\updefault}{\color[rgb]{0,0,0}$C_i$}%
}}}}
\put(3171,-691){\makebox(0,0)[lb]{\smash{{\SetFigFont{12}{14.4}{\rmdefault}{\mddefault}{\updefault}{\color[rgb]{0,0,0}$B_j$}%
}}}}
\put(2341,-646){\makebox(0,0)[lb]{\smash{{\SetFigFont{12}{14.4}{\rmdefault}{\mddefault}{\updefault}{\color[rgb]{0,0,0}$C'_i$}%
}}}}
\end{picture}%
\end{center}
\caption{Construction locale de $\Omega_\varepsilon$%
\label{presc:fig}}
\end{figure}

On peut remarquer que quand $\varepsilon$ tend vers~0, $\Omega_\varepsilon$ 
tend vers le graphe~$\Gamma$ pour la distance de Gromov-Hausdorff.

 En considérant la condition de Steklov sur les boules équatoriales
des demi-boules $B_j$ et la condition de Neumann sur le reste du bord 
de $\Omega_\varepsilon$, on va montrer que le début du spectre de 
$\Omega_\varepsilon$ tend vers le spectre du graphe, à une constante
multiplicative près:
\begin{theo}\label{conv:graphe}
Le $N$-écart spectral entre le spectre de Steklov-Neumann de 
$\Omega_\varepsilon$ et le spectre de $c^{n-1}\Delta_\Gamma$, où
$\Delta_\Gamma$ désigne le laplacien combinatoire sur $\Gamma$, tend vers~$0$
quand~$\varepsilon$ tend vers~$0$.
\end{theo}
\begin{proof}
La démonstration se déroule en deux étapes. D'abord, on décompose l'espace
des fonctions harmoniques (pour le problème de Steklov-Neumann) 
sur $\Omega_\varepsilon$ en deux sous-espaces pour
appliquer le lemme~\ref{conv:lem1}, puis on montre la convergence en
se restreignant à l'un des sous-espaces. On note $\partial
\Omega_{\varepsilon,S}$ le bord de Steklov de $\Omega_\varepsilon$, 
$\partial\Omega_{\varepsilon,S}^j$, $1\leq j\leq N$ ses composantes connexes
et $\mathcal H$ l'espace des fonctions harmoniques sur $\Omega_\varepsilon$
vérifiant la condition de Neumann sur $\partial\Omega_\varepsilon\backslash 
\partial\Omega_{\varepsilon,S}$.

\emph{Étape 1.} On définit l'espace $\mathcal H_0$ comme étant l'espace
des fonctions harmoniques de $\Omega_\varepsilon$ constantes sur
chacune des $N$ composantes connexes du bord de Steklov. L''orthogonal 
de $\mathcal H_0$ pour la forme quadratique $Q$ associée à l'opérateur 
$\Lambda$ contient les fonction constantes, qui sont aussi dans $\mathcal H_0$.
On définit donc $\mathcal H_\infty$, comme l'espace des fonctions 
$Q$-orthogonales à $\mathcal H_0$ et d'intégrale nulle sur 
$\partial\Omega_{\varepsilon,S}$. Si $f\in\mathcal H_0$ et si $g$ est 
$Q$-orthogonale à $H_0$, alors on a, 
en notant $f_j$ la valeur de $f$ sur $\partial\Omega_{\varepsilon,S}^j$: 
$(f,\Lambda g)=\int_{\partial\Omega_{\varepsilon,S}} 
f\frac{\partial g}{\partial\nu}=\sum_jf_j
\int_{\partial\Omega_{\varepsilon,S}^j}\frac{\partial g}{\partial\nu}$. Comme 
$(f,\Lambda g)$ est nul pour tout $f\in\mathcal H_0$, on en déduit que
$\int_{\partial\Omega_{\varepsilon,S}^j}\frac{\partial g}{\partial\nu}=0$ 
pour tout $j$. On a donc
\begin{equation}
\mathcal H_\infty=\left\{f\in\mathcal H,\ \int_{\partial\Omega_{\varepsilon,S}}
f=0,\ \int_{\partial\Omega_{\varepsilon,S}^j}\frac{\partial f}{\partial\nu}
=0\textrm{ pour tout }j\right\}.
\end{equation}

On doit minorer la forme quadratique $Q$ sur l'espace $\mathcal H_\infty$.
Pour cela, on va passer par l'intermédiaire du domaine $D=\cup_j B_j$.
Mais comme la restriction des formes harmoniques de $\Omega_\varepsilon$
ne vérifient pas la condition de Neumann sur les hémisphères qui bordent
les $B_j$ on va d'abord reformuler l'expression du bas du spectre de $Q$ sur 
$\mathcal H_\infty$. On pose, pour toute fonction $\tilde f\in C^\infty(F)$
telle que $\int_{\partial\Omega_{\varepsilon,S}}\tilde f=0$,
\begin{equation}
\tilde Q(\tilde f)=\inf_{\stackrel{f_{|\partial\Omega_{\varepsilon,S}}=\tilde f}
{\int_{\partial\Omega_{\varepsilon,S}^j}\frac{\partial f}{\partial\nu}=0}}
\int_{\Omega_\varepsilon}|\de f|^2.
\end{equation}
On peut vérifier que la borne inférieure de $\tilde Q$ 
(pour $\|\tilde f\|_2=1$) coïncide bien avec le bas du spectre de $Q$ sur 
$\mathcal H_\infty$.

On définit les espaces $\mathcal H^D$, $\mathcal H_\infty^D$ et les
formes quadratiques $Q^D$, $\tilde Q^D$ en remplaçant $\Omega_\varepsilon$ par
$D$ dans les définitions de $\mathcal H$,  $\mathcal H_\infty$, $Q$ et
$\tilde Q$. Comme $D\subset \Omega_\varepsilon$, $\tilde Q$ est minoré par 
la forme quadratique $\tilde Q^D$. La première valeur propre de 
$Q_{\mathcal H_\infty}$ est donc minorée par la première valeur propre de
$Q_{\mathcal H_\infty}^D$. Le domaine $D$ possède $N$ composantes connexes, 
donc la multiplicité de~0 dans spectre de Steklov de $D$ est $N$, les
fonctions propres étant les fonctions constantes sur chaque $B_j$.
La première valeur propre de $Q_{\mathcal H_\infty}^D$ est donc la 
$(N+1)$-ième valeur propre de $D$, qui est la première valeur propre non 
nulle $\sigma_1(B(\varepsilon))$ de la demi-boule 
de rayon $c\varepsilon$. Cette valeur propre se comporte comme 
$\varepsilon^{-2}$ quand $\varepsilon\to0$, ce qui permet d'appliquer 
que lemme~\ref{conv:lem1}.

\emph{Étape~2.} On doit maintenant comparer les spectres de $Q_{\mathcal H_0}$
et de la forme quadratique $q$ associée au laplacien combinatoire sur 
$\Gamma$. Les deux domaines des formes quadratiques sont en bijection 
de manière évidente, en identifiant une fonction sur les sommets $s_j$ 
de $\Gamma$ avec une fonction constante sur chaque $F_j$, prenant les 
mêmes valeurs. Les normes sur les deux espaces sont différentes. On notera
$|\cdot|_\Gamma$ la norme euclidienne canonique sur $\R^S$, la norme sur 
$\mathcal H_0$ est alors 
$|\cdot|=(c\varepsilon)^{n-1}\omega_{n-1}|\cdot|_\Gamma$, où $\omega_{n-1}$
désigne le volume de la boule euclidienne canonique de dimension~$n-1$.

Étant donnée  $f$ est une fonction sur $S$, on construit une fonction
test $\bar f$ sur $\Omega_\varepsilon$ prenant les mêmes valeurs que $f$
sur chaque $F_j$, constante sur chaque demi-boule $B_j$ et prolongée
de manière affine sur les cylindres $C_i$ constituant le domaine
$\Omega_\varepsilon$. On a $Q(f)\leq Q(\bar f)=
\int_{\Omega_\varepsilon}|\de\bar f|^2=\varepsilon^{n-1}\omega_{n-1}
\sum_{a_i\in A}\frac1{l_i-2c\varepsilon}
(f(x_i)-f(y_i))^2$, où $x_i$ et $x_i$ sont les extrémités de l'arête $a_i$,
donc $\limsup_{\varepsilon\to0} Q(f)/|f|^2\leq c^{n-1}q(f)/|f|_\Gamma^2$.

Réciproquement, étant donné une fonction $f\in\mathcal H_0$, on 
construit une fonction test sur les arêtes du graphe $\Gamma$.
À partir de la donnée de $f$ sur l'image d'un cylindre $C'_i=B^{n-1}\times
[0,\varepsilon,l_i]$, on définit
une fonction $\tilde f$ sur l'intervalle $[0,l_i]$ 
par moyennation sur chaque boule $B^{n-1}$, c'est-à-dire que
$\tilde f(t)=\frac1{\varepsilon^{n-1}\omega_{n-1}}\int_{B^{n-1}}
f(x,t)\de x$. On a alors, en utilisant le fait que plongement 
de $C'_i$ dans $\Omega_\varepsilon$ est $1$-lipschitzien,
\begin{equation}
|\de\tilde f|^2\leq 
\frac1{\varepsilon^{n-1}\omega_{n-1}}\int_{B^{n-1}}|\de f|^2\de x.
\end{equation}
On obtient ainsi une fonction $\tilde f$ sur $\Gamma$ qui est $C^1$,
qui coïncide avec $f$ sur les sommets  
et qui vérifie $\varepsilon^{n-1}\omega_{n-1}\|\de\tilde f\|^2\leq
\int_{\Omega_\varepsilon}|\de f|^2=Q(f)$. Comme sur le graphe, on a
$\|\de\tilde f\|^2\geq q(f)$, on obtient que
$\liminf_{\varepsilon\to0} Q(f)/|f|^2\geq c^{n-1}q(f)/|f|_\Gamma^2$.

On a finalement montré que $Q(f)/|f|^2$ converge simplement vers
$c^{n-1}q(f)/|f|_\Gamma^2$ quand $\varepsilon\to0$. Comme on travaille
sur des espaces de dimension finie, cela suffit pour assurer la convergence
du spectre et des espaces propres des deux opérateurs.
\end{proof}

\subsection{Application à la prescription de spectre}
On a maintenant tous les ingrédients pour montrer le théorème~\ref{intro:th1}.
La méthode la plus directe serait d'utiliser le théorème~\ref{conv:bord} 
de convergence du spectre vers celui du bord et les résultats de prescription
obtenus par Colin de~Verdière dans~\cite{cdv87} (on peut les adapter de manière
à prescrire la classe conforme). Cependant, cette méthode ne fonctionne
que si la dimension du bord est au moins~3. On va donc procéder autrement
en utilisant les plongements de graphes construits au paragraphe précédent.

\begin{proof}[du théorème~\ref{intro:th1}]
D'après le théorème~\ref{presc:thcdv}, il existe un graphe complet 
$\Gamma$ ayant le spectre voulu, avec la propriété de stabilité. On va 
transplanter ce spectre dans la variété $M$ en commençant par traiter 
le cas $\rho=1$.

On commence par déformer la variété $M$ en respectant la classe conforme et 
de manière à pouvoir plonger isométriquement le graphe $\Gamma$ dans 
$M$ en plaçant les sommets sur $\partial M$. Comme la dimension de $M$ 
est plus grande que~3, on peut le faire sans que les arêtes se croisent.
On note $g$ la métrique obtenue sur $M$.

\emph{A priori}, la métrique au voisinage du plongement de $\Gamma$
n'est pas euclidienne, on ne peut donc pas plonger isométriquement
un ouvert $\Omega_\varepsilon$ (construit au paragraphe précédent) au
voisinage de $\Gamma$. Cependant, pour tout $\varepsilon$ on peut 
déformer (de manière non conforme) la métrique $g$ en une métrique 
$g_\varepsilon$ telle que les graphes soient toujours plongés 
isométriquement et que $\Gamma$ possède un voisinage isométrique 
au domaine $\Omega_\varepsilon$. On peut de plus faire en sorte que
$\frac1{\tau_\varepsilon}g_\varepsilon\leq g\leq \tau_\varepsilon
g_\varepsilon$, pour une famille de réels $\tau_\varepsilon>1$ telle que
$\tau_\varepsilon\to1$ quand $\varepsilon\to0$.

On peut maintenant appliquer les résultats de convergence spectrale de
la section précédente. Pour un $\delta>0$ petit donné, on peut trouver
un $\varepsilon$ tel que le $N$-écart spectral entre $\Gamma$
et $\Omega_\varepsilon$ soit inférieur à $\delta$. En utilisant les
arguments de la démonstration du théorème~\ref{conv:bord} (étape~2),
on peut choisir $\varepsilon$ suffisamment petit pour que le $N$-écart
spectral entre $(\Omega_\varepsilon,g_\varepsilon)$ et
$(\Omega_\varepsilon,g)$ soit inférieur à $\delta$. Enfin, on peut faire
converger le spectre de $M$ vers celui de $(\Omega_\varepsilon,g)$
d'après le théorème~\ref{conv:thdom}, et en particulier déformer $g$
de manière conforme de sorte que le $N$-écart spectral entre $M$ et 
$(\Omega_\varepsilon,g)$ soit lui aussi inférieur à $\delta$. 
On peut donc rentre le $N$-écart spectral entre $M$ et $\Gamma$ 
arbitrairement petit, et ce de manière conforme.

Traitons maintenant le cas où $\rho$ varie. Quitte à multiplier les
$a_i$ par une constante, on peut supposer que $\rho\leq1$. Il suffit
d'ajouter une étape à la construction précédente et d'utiliser le 
théorème~\ref{conv:thdens} pour faire tendre le spectre
de la variété $(M,\rho)$ vers celui de $M$ sans densité.
\end{proof}

\section{Multiplicité en dimension~2}\label{bornes}
\subsection{Lignes nodales des fonctions propres}
On va montrer dans cette section les obstructions à la prescription
de multiplicité en dimension~2 (théorème~\ref{intro:th2}, \ref{intro:th3}
et~\ref{intro:th4}).
Comme dans le cas du laplacien, les deux principaux ingrédients sont 
le théorème nodal de Courant et le théorème de Cheng sur la structure
local de l'ensemble nodal. Dans toute la suite du texte, les fonctions
propres considérées seront les fonctions harmoniques sur $M$ et pas leur
restriction à $\partial M$. En particulier, les lignes et les domaines 
nodaux seront considérés sur $M$.

Avec ces précisions, le théorème nodal de Courant est valide pour
les problèmes de Steklov et Steklov-Neumann, quelle que soit la dimension:
\begin{theo}
Le nombre de domaines nodaux de la $k$-ième fonction propre du problème
de Steklov (ou de Steklov-Neumann) est au plus
égal à $k+1$.
\end{theo}
La démonstration (essentiellement la même que dans le cas du laplacien) 
est donnée dans \cite{ks69} pour la dimension~2, et elle se généralise
immédiatement en toute dimension.

Contrairement aux fonctions propres du laplacien, les fonctions propres
de Steklov ont la particularité que leurs domaines nodaux rencontrent
toujours le bord. Cette propriété a déjà été utilisée, par exemple dans
\cite{bkps10}, et nous y feront appel pour démontrer le 
théorème~\ref{intro:th3}:
\begin{lem}\label{nodal:lem}
Tout domaine nodal rencontre le bord de la variété. Dans le cas
du problème de Steklov-Neumann, tout domaine nodal rencontre le bord
de Steklov.
\end{lem}
\begin{proof}
Soit $f$ une fonction harmonique non nulle et $D$ un domaine nodal de $f$ 
ne rencontrant pas le bord de Steklov de la variété. Comme $f$ est 
harmonique et nulle sur le bord de $D$ (ou vérifie la condition de Neumann
le long du bord de Neumann de la variété), elle est uniformément nulle 
dans~$D$. Par conséquent, elle est nulle partout.
\end{proof}

Dans~\cite{ch76}, S.~Y.~Cheng a décrit la structure locale de l'ensemble
nodal des fonctions propres du laplacien en dimension~2. On peut les 
appliquer aux fonctions harmoniques, et le lemme qui précède permet 
de préciser certaines propriétés topologiques des domaines et des lignes 
nodales, en particulier leur incompressibilité (Une partie d'une surface 
est dite incompressible si son groupe fondamental s'injecte dans celui
de la surface). L'énoncé qui suit rassemble ces résultats:
\begin{theo}\label{nodal:th}
Supposons que $M$ est de dimension~2. Si $f$ est une fonction propre
du problème de Steklov, alors :
\begin{enumerate}
\item Les domaines nodaux de $f$ sont incompressibles.
\item L'ensemble nodal de $f$ intérieur à $M$ est la réunion d'un
nombre fini de courbes $C^2$ qui sont soit des cercles immergés, soit des arcs 
immergés dont les extrémités sont sur $\partial M$.
\item La réunion de ces courbes forme un graphe fini dont les composantes
connexes sont incompressibles.
\item Soit $p$ un point intérieur à $M$. Si $p$ est un point critique de 
$f$ situé sur l'ensemble nodal et que 
l'ordre d'annulation de $f$ en $p$ est $k$, alors au voisinage de $p$
l'ensemble nodal est la réunion de $k$ courbes s'intersectant en $p$, de
courbure géodésique nulle en $p$ et formant un système équiangulaire 
(en particulier, les sommets du graphe nodal intérieur à $M$ sont 
de degré pair).
\item
Tout point du bord où $f$ s'annule est l'extrémité d'une ligne nodale
intérieure à $M$.

\item Chaque composante connexe du bord contient un nombre pair d'extrémités
du graphe nodal.
\item  Dans le cas du problème de Steklov-Neumann, si $p$ est un
point du bord de Neumann où $f$ s'annule, l'ordre d'annulation $k$ de 
$f$ en $p$ est fini et le point $p$ est un zéro isolé en restriction à 
$\partial M_N$. Au voisinage de $p$ dans $M$, l'ensemble
nodal est la réunion de $k$ arcs partant de $p$, de courbure géodésique nulle
en $p$ et dont l'extension par réflexion par rapport au bord forme
un système équiangulaire. 
\end{enumerate}
\end{theo}
On appliquera en particulier les propriétés d'incompressibilité au cas du 
disque. On obtient alors :
\begin{cor}\label{nodal:cor}
Si $M$ est homéomorphe à un disque, alors les domaines nodaux sont 
homéomorphes à des disques et les composantes connexes du graphe nodal 
sont des arbres.
\end{cor}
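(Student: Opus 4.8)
Le plan est de déduire directement ce corollaire des propriétés d'incompressibilité établies dans le théorème~\ref{nodal:th}. L'observation clé est que si $M$ est homéomorphe à un disque, son groupe fondamental $\pi_1(M)$ est trivial ; par conséquent, toute partie incompressible de $M$, dont le groupe fondamental s'injecte par définition dans $\pi_1(M)$, est nécessairement simplement connexe. Toute la démonstration consiste donc à traduire l'incompressibilité en simple connexité, puis à exploiter cette dernière séparément pour les surfaces et pour les graphes.

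Je traiterais d'abord les domaines nodaux. D'après le point~1 du théorème~\ref{nodal:th}, chaque domaine nodal est incompressible, donc simplement connexe d'après l'observation précédente. Or un domaine nodal est une sous-surface connexe compacte à bord de $M$ (le lemme~\ref{nodal:lem} garantit d'ailleurs qu'il rencontre $\partial M$, ce qui confirme qu'on a bien affaire à une sous-surface à bord et non à un domaine intérieur), et la classification des surfaces assure qu'une telle surface simplement connexe est homéomorphe à un disque. On pourrait aussi invoquer le fait plus élémentaire qu'un ouvert simplement connexe du plan est homéomorphe à un disque, en remarquant que $M$, étant homéomorphe à un disque, se plonge dans le plan.

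Pour le graphe nodal, j'utiliserais le point~3 du théorème~\ref{nodal:th}, selon lequel chaque composante connexe du graphe nodal est incompressible, donc simplement connexe par le même argument. Il reste alors à observer qu'un graphe fini connexe et simplement connexe est un arbre : en effet, le groupe fondamental d'un graphe est libre, de rang égal à son premier nombre de Betti, c'est-à-dire au nombre de cycles indépendants ; la trivialité de $\pi_1$ équivaut donc à l'absence de cycle, ce qui caractérise précisément les arbres.

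La principale subtilité à soigner sera la justification explicite des deux implications topologiques \emph{simple connexité $\Rightarrow$ disque} (pour les surfaces) et \emph{simple connexité $\Rightarrow$ arbre} (pour les graphes), qui reposent toutes deux sur la trivialité de $\pi_1(M)$ et sur le sens précis du mot \textbf{incompressible} rappelé plus haut. Ces deux étapes sont standard, mais c'est là que se concentre tout le contenu de l'énoncé : une fois le théorème~\ref{nodal:th} acquis, le corollaire en est une conséquence essentiellement formelle, sans calcul supplémentaire.
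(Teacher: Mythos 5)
Votre démonstration est correcte et suit essentiellement la même voie que celle de l'article : comme $\pi_1(M)$ est trivial, l'incompressibilité des domaines nodaux (point~1 du théorème~\ref{nodal:th}) donne leur simple connexité, donc ce sont des disques, et l'incompressibilité des composantes du graphe nodal (point~3) équivaut à l'absence de cycle, donc ce sont des arbres. Vos justifications supplémentaires (classification des surfaces, groupe fondamental libre d'un graphe) ne font qu'expliciter ce que l'article laisse implicite.
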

\begin{proof}[du théorème~\ref{nodal:th}]
Comme $\Delta f=0$ dans l'intérieur de $M$, 
on peut appliquer le résultat de S.~Y.~Cheng (théorème~2.5 de~\cite{ch76}).
En particulier,
l'ensemble nodal est la réunion de courbes immergées (les lignes nodales)
qui sont localement en nombre fini, cette réunion étant homéomorphe
à un graphe localement fini.
Cependant, comme l'intérieur de la surface n'est pas compact, on doit 
vérifier la finitude globale du graphe nodal, qui découle des deux points 
suivants:
\begin{itemize}
\item[(i)] le nombre de lignes nodales est fini;
\item[(ii)] les points d'intersection des lignes nodales sont en nombre fini.
\end{itemize}
Le point (i) se déduit du théorème de Courant : le nombre de domaines 
délimités par un ensemble de lignes nodales est au moins égal au nombre
de ces lignes ; par conséquent le nombre total de lignes nodales est 
majoré par le nombre de domaines nodaux, en particulier il est fini.
On montre le point (ii) à l'aide de la formule d'Euler-Poincaré appliquée
à la surface : comme les domaines nodaux sont en nombre fini et que
leur caractéristique d'Euler est majorée par~1, la caractérisque
d'Euler du graphe nodal est minorée en fonction de la topologie de 
la surface et du nombre de domaines. Or, les sommets du graphe sont
de deux types : d'une part les extrémités des lignes nodales, qui 
sont en nombre fini et de degré~1 ; d'autre part les intersections
de lignes, qui sont de degré au moins~4. Si les sommets de degré~4 sont
en nombre infini, la caractéristique d'Euler du graphe serait donc $-\infty$,
ce qui contredit la formule d'Euler-Poincaré.

On en déduit de ce qui précède les points~2 et~4 du théorème et le fait
que le graphe nodal est fini.

Soit $D$ un domaine nodal et $\gamma$ une courbe de $D$ non contractile
dans $D$. Si $\gamma$ est contractile dans $M$, alors elle entoure un 
domaine nodal $D'$ distinct de $D$. En outre, $\gamma$ sépare $D'$ de
$\partial M$, ce qui contredit le lemme~\ref{nodal:lem}. Par conséquent,
$D$ est incompressible. Le même argument montre l'incompressibilité
du graphe nodal. On obtient ainsi les points~1 et~3.

Montrons le point~5. Supposons que $p$ est un point du bord qui n'est
pas l'extrémité d'une ligne nodale. Le point $p$ n'est donc pas situé
à la frontière entre deux domaines nodaux, il est contenu dans un domaine
nodal $D$ sur lequel on supposera que $f$ est positive. D'après le théorème 
d'unique prolongement~\ref{rappels:prol} il n'y a pas de ligne nodale 
le long du bord, on peut donc trouver un petit voisinage $U$ de $p$ délimité 
par une courbe de niveau $f(x)=\varepsilon$ avec $\varepsilon>0$ petit.
En restriction à $D$, $f$ est la première fonction propre du problème
de Steklov-Dirichlet avec condition de Dirichlet sur les ligne nodales
qui bordent $D$ à l'intérieur de $M$. Or, si on définit la fonction test
$\tilde f$ par $\tilde f=\varepsilon$ sur $U$ et $\tilde f=f$ sur
$D\backslash U$, le quotient de Rayleigh de $\tilde f$ est strictement plus
petit que celui de $f$, ce qui contredit que $f$ soit la première fonction
propre sur $D$. 

Le fait qu'un nombre pair de lignes nodales rejoigne chaque composante du bord
découle du fait que le signe de la fonction propre change chaque fois
qu'on traverse une ligne nodale.

Reste à traiter le cas du problème de Steklov-Neumann. 
On considère deux copies de la variété $M$ 
qu'on recolle de manière symétrique le long du
bord de Neumann et on note $M'$ la surface obtenue.
Comme le problème de Steklov est conformément invariant en dimension~2,
on peut lisser la métrique le long du recollement de manière conforme 
et symétrique. Les fonctions propres sur $M$ correspondent alors aux
fonctions propres sur $M'$ qui sont symétriques. On peut en particulier
leur appliquer les résultats de Cheng (point~4). La symétrie de la fonction
implique la symétrie des lignes nodales (sur $M'$) au voisinage du 
bord de Neumann de $M$. On doit encore montrer que le bord de Neumann
ne contient pas de ligne nodale: une fonction harmonique $f$ sur $M'$ est 
localement la partie réelle d'une 
fonction holomorphe $g$ (en munissant $M'$ de la structure complexe 
induite par la structure conforme). Si $f$ est une fonction propre symétrique,
alors la condition de Neumann et l'équation de Cauchy-Riemann implique
que $\mathrm{Im}(g)$ est constante le long du bord de Neumann de $M$. On peut
choisir $g$ de sorte que cette constante soit nulle, les zéros de $f$ sur
le bord de Neumann sont donc les zéros d'une fonction holomorphe. Par 
conséquent ils sont isolés.

\end{proof}

\begin{proof}[du corollaire~\ref{nodal:cor}]
Si $M$ est un disque, l'incompressibilité des domaines nodaux implique
qu'ils sont simplement connexes, donc que ce sont des disques. 

Les composantes connexes du graphe nodal sont planaires, et leur
incompressiblité signifie qu'ils sont sans cycle. Donc ce sont des arbres.
\end{proof}

\subsection{Bornes sur la multiplicité}
On peut maintenant démontrer les théorèmes~\ref{intro:th2}, 
\ref{intro:th3} et~\ref{intro:th4}. En ce qui concerne le 
théorème~\ref{intro:th2},
on reprendra les arguments de~\cite{ch76} et~\cite{be80}, qui sont
moins précis que ceux de~\cite{na88} mais plus faciles à adapter
au problème de Steklov.

\begin{proof}[du théorème~\ref{intro:th2}]
Supposons que la surface $M$ soit orientable. On note $E_k$ l'espace propre 
associé à la valeur propre $\sigma_k(M)$ et $m_k$ sa
multiplicité. Selon~\cite{be80}, si $m_k>4\gamma+2k+1$, il existe un point 
$x$ dans l'intérieur de $M$ et une fonction propre $f\in E_k$ telle que 
l'ordre d'annulation de $f$ en $x$ soit strictement supérieur à $2\gamma+k$.
Localement, il existe donc au moins $4\gamma+2k+2$ arcs nodaux partant de $p$.

 Si on «~ferme~» la surface en quotientant chaque composante du bord 
sur un point, tous les arcs nodaux se referment, et il existe donc au moins
$2\gamma+k+1$ lacets distincts $C^1$ par morceaux dans l'ensemble nodal. Or,
Cheng a montré (\cite{ch76}, lemme~3.1) que ces lacets décomposent la 
surface en au moins $k+2$ composantes connexes. La fonction $f$ possède
donc au moins $k+2$ domaines nodaux, ce qui contredit le théorème de Courant.

Comme dans \cite{be80}, le cas des surfaces non orientables se traite
par passage à un revêtement à deux feuillets. La surface obtenue
en quotientant les bords est de caractéristique d'Euler $p=(1-\chi(M)-l)$.
Les arguments des \cite{be80} donnent alors la majoration $m_k\leq 4p+4k+3$.
\end{proof}

\begin{proof}[du théorème~\ref{intro:th3}]
On note $E$ l'espace propre associé à la valeur propre $\sigma_i(M,\rho,g)$
pour $i=1$ ou $2$.

Soit $p$ un point intérieur au disque. Si $E$ est de
dimension au moins~4, il existe une fonction propre non nulle $f\in E$
telle que $f$ et $\de f$ soient nuls en $p$. Le point $p$ est 
donc un sommet du graphe nodal de $f$ et il en part au moins quatre arêtes.
Comme le graphe nodal est un arbre dont les feuilles sont sur le bord, il
délimite au moins quatre domaines nodaux. Il y a donc contradiction avec 
le théorème de Courant.

Supposons maintenant que $i=1$ et que $E$ soit de dimension~3.
Si $p_0$ est un point du bord, le sous-espace des fonctions $f\in E$ telles
que $f(p_0)=0$ est de dimension au moins~2. Pour tout point $p$ du bord
distinct de $p_0$, il existe donc une fonction $f_p$, qu'on supposera
de norme~1, telle que $f(p)=f(p_0)=0$. Comme chacun de ces points
est nécessairement l'extrémité d'un ligne nodale et que la fonction
$f_p$ a exactement deux domaines nodaux, $p_0$ et $p$ sont les 
extrémités de l'unique ligne nodale de $f_p$.

Le bord est donc partagé
en deux intervalles, $I_p^+$ et $I_p^-$, d'extrémités $p_0$ et $p$, sur 
lesquels la fonction $f_p$ est respectivement positive et négative. En 
faisant tendre $p$ vers $p_0$,
on peut faire tendre la longueur de $I_p^-$ vers~0. Comme les fonctions
$f_p$ sont normées et que $E$ est de dimension finie, la famille $f_p$ 
admet une limite $f$ (quitte à extraire une sous-famille). La fonction
$f$ est alors positive ou nulle sur la totalité du bord, puisque l'évaluation
en un point est une forme linéaire continue sur $E$. Par conséquent, $f$
est de signe constant, ce qui est impossible puisque c'est une fonction
propre de la valeur propre $\sigma_1$.
\end{proof}

\begin{proof}[du théorème~\ref{intro:th4}]
On note $I$ une composante connexe du bord de Neumann $\partial\mathbb D_N$
et on choisit $k+1$ points distincts $x_1,\ldots,x_{k+1}$ dans $I$.
Supposons que la multiplicité de $\sigma_k(\mathbb D, \partial\mathbb D_S,
\rho,g)$ soit supérieure ou égale à $k+2$. On peut alors trouver
une fonction propre $f$ associée à cette valeur propre qui s'annule en 
tous les points $x_i$, $i\in[1,k+1]$. En vertu du point~6 du 
théorème~\ref{nodal:th}, chaque $x_i$ appartient à une composante connexe
du graphe nodal qui joint $x_i$ à un point du bord de Steklov, ces 
composantes étant distinctes. L'ensemble nodal sépare donc $\mathbb D$ en
au moins $k+2$ composantes connexes, ce qui contredit  le théorème 
de Courant.
\end{proof}

\noindent Pierre \textsc{Jammes}\\
Laboratoire J.-A. Dieudonné\\
Université Nice Sophia Antipolis --- CNRS (UMR 7351)\\
Parc Valrose \\
06108 Nice Cedex 02, France\\
\texttt{pjammes@unice.fr}

\end{document}